\theoremstyle{definition}
\newtheorem{theorem}{Theorem}[section]
\newtheorem{lemma}[theorem]{Lemma}
\newtheorem{corollary}[theorem]{Corollary}
\newtheorem{proposition}[theorem]{Proposition}
\newtheorem{remark}[theorem]{Remark}
\newcommand{\vertiii}[1]{{\left\vert\kern-0.25ex\left\vert\kern-0.25ex\left\vert #1 
    \right\vert\kern-0.25ex\right\vert\kern-0.25ex\right\vert}}
\begin{document}
\title{Wigner--Yanase--Dyson function and logarithmic mean}
\author{Shigeru Furuichi$^1$\footnote{E-mail:furuichi.shigeru@nihon-u.ac.jp} \\
$^1${\small Department of Information Science,}\\
{\small College of Humanities and Sciences, Nihon University,}\\
{\small 3-25-40, Sakurajyousui, Setagaya-ku, Tokyo, 156-8550, Japan}}
\date{}
\maketitle
{\bf Abstract.} 
The ordering between Wigner--Yanase--Dyson function and logarithmic mean is known. Also bounds for logarithmic mean are known. In this paper, we give two reverse inequalities for Wigner--Yanase--Dyson function and logarithmic mean. We also compare the obtained results with the known bounds of the logarithmic mean. Finally, we give operator inequalities based on the obtained results.
\vspace{3mm}

{\bf Keywords : } 
Wigner--Yanase--Dyson function, logarithmic mean, Kantorovich constant, Specht ratio and reverse inequalities
\vspace{3mm}

{\bf 2020 Mathematics Subject Classification : } 
Primary 26E60, Secondary 26D07.
\vspace{3mm}


\section{Introduction}
	In this paper, we study the ordering of the symmetric homogeneous means $N(x,y)$ for $x,y>0$.
	The mean $N(x,y)$ is called the symmetric homogeneous mean if the following conditions are satisfied (\cite{HK2003}):
	\begin{itemize}
	\item[(i)] $N(x,y)=N(y,x)$.
	\item[(ii)] $N(kx,ky)=k N(x,y)$ for $k>0$.
	\item[(iii)] $\min\{x,y\}\le N(x,y)\le \max\{x,y\}$.
	\item[(iv)] $N(x,y)$ is non--decreasing in $x$ and $y$.
	\end{itemize}
Since we do not treat the weighted means, a symmetric homogeneous mean is often called a mean simply in this paper.
In order to determine the ordering of two means such as $N_1(x,y)\le N_2(x,y)$ for $x,y>0$, it is sufficient to show the ordering $N_1(x,1)\le N_2(x,1)$ for $x>0$ by homogeneity such that $y N\left(x/y,1\right)=N(x,y)$ for a symmetric homogeneous mean $N(\cdot,\cdot)$ and $x,y>0$. Throughout this paper, we use the standard symbols $A(x,y):=\dfrac{x+y}{2}$, $:=\dfrac{x-y}{\log x-\log y},\,\,(x\neq y>0)$ with $L(x,x):=x$, $G(x,y):=\sqrt{xy}$ and $H(x,y):=\dfrac{2xy}{x+y}$ as the arithmetic mean, logarithmic mean, geometric mean and harmonic mean, respectively. We define the Wigner--Yanase--Dyson function by
\[\left\{ \begin{array}{l}
{W_p}\left( {x,y} \right): =\dfrac{{{{ p\left( {1 - p} \right)\left( {x - y} \right)}^2}}}{{\left( {{x^p} - {y^p}} \right)\left( {{x^{1 - p}} - {y^{1 - p}}} \right)}},\,\,\,\left( x\neq y>0,\,\,\, {p \ne 0,1} \right),\\
{W_p}\left( {x,y} \right): =L(x,y),\,\,\,\left( x\neq y>0,\,\,\, p= 0\,\,\text{or}\,\,1 \right),\\
W_p(x,x):=x,\,\,\,\left(x>0,\,\,\, p\in\mathbb{R} \right).
\end{array} \right.\]
Note that we have $\lim\limits_{p\to 0}W_p(x,y)=\lim\limits_{p\to 1}W_p(x,y)=L(x,y)$.
The Wigner--Yanase--Dyson function was firstly appeared in \cite{PH1996}. 
Since $W_p(x,1)$ is matrix monotone function on $x\in(0,\infty)$ when $-1\le p\le 2$ \cite{Sza2007}, the parameter $p$ is often considered to be $-1\le p \le 2$. We mainly consider the case of $0\le p \le 1$ in this paper, as it was done so in \cite{FY2012,GHI2009,Han2008} to study the  Wigner--Yanase--Dyson metric with Morozova--Chentsov function or the Wigner--Yanase--Dyson skew information. It is easily seen that $W_{1-p}(x,y)=W_{p}(x,y)$ and $W_{1/2}(x,y)=\left(\dfrac{\sqrt{x}+\sqrt{y}}{2}\right)^2$ which is called the Wigner--Yanase function or the power mean (the binomial mean \cite{HK2003}) $B_{p}(x,y):=\left(\dfrac{x^p+y^p}{2}\right)^{1/p}$ with $p=1/2$. It is also known that
$$
H(x,y)\le G(x,y)\le L(x,y) \le W_p(x,y) \le W_{1/2}(x,y) \le A(x,y),\,\,\,(x,y>0,\,\,0\le p\le 1).
$$
	
	The set $M(n,\mathbb{C})$ represents all $n\times n$ matrices on complex field.  The set $M_+(n,\mathbb{C})$ represents all positive semi--definite matrices in $M(n,\mathbb{C})$.
	The stronger ordering $N_1(x,y)\preceq N_2(x,y)$ for means $N_1$ and $N_2$ have been studied in \cite{FA2022,HK1999,HK2003,K2011,K2022} for the study of the unitarily invariant norm inequalities and recent advances on the related topics. 
	
	It is known \cite{HK2003,K2011} that the ordering  $N_1(x,y)\preceq N_2(x,y)$ is equivalent to the  unitarily invariant norm inequality $\vertiii{N_1(S,T)X}\le \vertiii{N_2(S,T)X}$ for $S,T\in M_+(n,\mathbb{C})$ and arbitrary $X\in M(n,\mathbb{C})$, implies the usual ordering $N_1(x,y)\le N_2(x,y)$ which is equivalent to the Hilbert--Schmidt (Frobenius) norm inequality $\| N_1(S,T)X\|_2\le \| N_2(S,T)X\|_2$. See \cite{HK2003,K2011} the precise definition and equivalent conditions on the stronger ordering $N_1(x,y)\preceq N_2(x,y)$. We study the usual ordering for some means in this paper. 
	The following propositions are known.
\begin{proposition}(\cite{HKPR2013})\label{prop1.1}
For $S,T\in M_+(n,\mathbb{C})$ and any $X\in M(n,\mathbb{C})$, 
if $1/2\le p \le 1 \le q \le 2$ or $-1\le q \le 0 \le p \le 1/2$, 
then we have
$$
\vertiii{H(S,T)X} \le \vertiii{W_q(S,T)X} \le \vertiii{L(S,T)X} \le \vertiii{W_p(S,T)X} \le \vertiii{B_{1/2}(S,T)X}.
$$
In particular, $p \in[0,1] \Longrightarrow \vertiii{L(S,T)X} \le \vertiii{W_p(S,T)X}$.
\end{proposition}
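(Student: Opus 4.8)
The plan is to translate each of the four norm inequalities into a positive-definiteness statement via the Hiai--Kosaki criterion \cite{HK2003,K2011}: for symmetric homogeneous means $M_1,M_2$, the inequality $\vertiii{M_1(S,T)X}\le\vertiii{M_2(S,T)X}$ holds for every unitarily invariant norm, all $S,T\in M_+(n,\mathbb{C})$ and all $X$, if and only if the function $t\mapsto M_1(e^t,e^{-t})/M_2(e^t,e^{-t})$ is positive definite on $\mathbb{R}$. Before computing anything, I would make two reductions. First, since $W_{1-p}=W_p$, the substitution $p\mapsto 1-p$, $q\mapsto 1-q$ carries the range $-1\le q\le0\le p\le1/2$ onto $1/2\le p\le1\le q\le2$, so it suffices to treat the latter. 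Second, a direct limit computation gives $W_{1/2}=B_{1/2}$, $W_1=L$ and $W_2=H$; hence the entire chain is the single monotonicity assertion that $s\mapsto\vertiii{W_s(S,T)X}$ is non-increasing on $[1/2,2]$, and it is enough to prove that $\rho_{s,s'}(t):=W_{s'}(e^t,e^{-t})/W_s(e^t,e^{-t})$ is positive definite whenever $1/2\le s\le s'\le2$.

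Next I would pass to hyperbolic variables. Writing $x=e^t$, $y=e^{-t}$ one finds $W_s(e^t,e^{-t})=\dfrac{s(1-s)\sinh^2 t}{\sinh(st)\,\sinh((1-s)t)}$, with $L(e^t,e^{-t})=\sinh t/t$, $H(e^t,e^{-t})=1/\cosh t$ and $B_{1/2}(e^t,e^{-t})=\cosh^2(t/2)$. Rather than attacking $\rho_{s,s'}$ head-on, I would use Schoenberg's theorem: writing $\rho_{s,s'}=\exp\left(-\int_s^{s'}\chi_v\,dv\right)$ with $\chi_v(t):=-\partial_v\log W_v(e^t,e^{-t})$, the function $\int_s^{s'}\chi_v\,dv$ vanishes at $t=0$, and since conditionally negative definite functions form a cone closed under integration, $\rho_{s,s'}$ is positive definite as soon as each $\chi_v$ is conditionally negative definite. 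This reduces the whole proposition to a single pointwise-in-$v$ statement.

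The heart of the argument is thus to verify that $\chi_v$ is conditionally negative definite, and here the two subintervals behave slightly differently. For $v\in(1,2]$ a short computation gives $\chi_v(t)=-\tfrac{2v-1}{v(v-1)}+t\coth(vt)+t\coth((v-1)t)$; using the partial-fraction expansion $t\coth(ct)=\tfrac1c+\sum_{k\ge1}\tfrac{(2/c)\,t^2}{t^2+(k\pi/c)^2}$ together with the fact that each $t^2/(t^2+a^2)$ is conditionally negative definite (because $a^2/(t^2+a^2)$ is positive definite), $\chi_v$ is displayed as a convergent sum of conditionally negative definite functions and a constant, hence is itself such. For $v\in[1/2,1)$ the same computation yields $\chi_v(t)=\tfrac{2v-1}{v(1-v)}-P_v(t)$ with $P_v(t)=\dfrac{t}{\sinh((1-v)t)}\cdot\dfrac{\sinh((2v-1)t)}{\sinh(vt)}$; here $t/\sinh((1-v)t)$ is positive definite, and $\sinh((2v-1)t)/\sinh(vt)$ is positive definite because $0\le 2v-1\le v$, so $P_v$ is a product of positive definite functions and therefore positive definite, and since $P_v(0)=\tfrac{2v-1}{v(1-v)}$, the function $\chi_v=P_v(0)-P_v$ is conditionally negative definite.

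The main obstacle I anticipate is precisely this last positive-definiteness bookkeeping: the ratios $\rho_{s,s'}$ are not themselves products of the elementary positive definite kernels ($1/\cosh$, $t/\sinh t$, and $\sinh(at)/\sinh(bt)$ with $a\le b$), so the direct route through Bochner's theorem would require an explicit Fourier-transform (residue) computation, whereas the differentiate-in-the-parameter/Schoenberg route above sidesteps it by landing on the manifestly signed building blocks. Assembling the pieces, positive definiteness of $\rho_{s,s'}$ for all $1/2\le s\le s'\le2$ yields the four inequalities of the chain through the endpoint identifications $W_{1/2}=B_{1/2}$, $W_1=L$, $W_2=H$; in particular the case $v\in[1/2,1)$ alone already gives $\vertiii{L(S,T)X}\le\vertiii{W_p(S,T)X}$ for $p\in[1/2,1]$, and the symmetry $W_{1-p}=W_p$ extends it to all $p\in[0,1]$.
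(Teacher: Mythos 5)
The paper itself contains no proof of Proposition \ref{prop1.1}: it is imported verbatim from \cite{HKPR2013} as a known result, so your argument can only be compared with the method of the cited literature, not with anything internal to this paper. Measured that way, your proposal is essentially correct and in fact proves something stronger than what is quoted, namely strong monotonicity of $s\mapsto \vertiii{W_s(S,T)X}$ on $[1/2,2]$, from which the displayed chain follows via the endpoint identifications $W_{1/2}=B_{1/2}$, $W_1=L$, $W_2=H$ (all three of which you compute correctly) and the symmetry $W_{1-p}=W_p$, which does map the range $-1\le q\le 0\le p\le 1/2$ onto $1/2\le 1-p\le 1\le 1-q\le 2$. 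The hyperbolic formula $W_s(e^t,e^{-t})=s(1-s)\sinh^2t/(\sinh(st)\sinh((1-s)t))$, the identity $\chi_v=P_v(0)-P_v$ on $[1/2,1)$ with $P_v$ a product of the positive definite kernels $t/\sinh((1-v)t)$ and $\sinh((2v-1)t)/\sinh(vt)$ (legitimate there since $0\le 2v-1\le v$), the $\coth$ partial-fraction expansion on $(1,2]$, and the cancellation giving $\chi_v(0)=0$ all check out. This differentiate-in-the-parameter/Schoenberg organization is in the spirit of Kosaki's strong-monotonicity paper \cite{Kosaki2014} (in the bibliography here but not used), whereas \cite{HKPR2013} obtains the chain by verifying positive definiteness of the specific ratios directly, with Fourier-analytic formulas of Hiai--Kosaki type; what your route buys is that it bypasses those explicit transform computations, at the cost of proving a one-parameter family of statements rather than four inequalities.

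Three points should be tightened before this counts as complete. First, the Hiai--Kosaki equivalence between the norm inequality and positive definiteness of the ratio is stated for means in their class; to apply it with $W_q$, $q\in(1,2]$, you need $W_q$ to be a symmetric homogeneous mean, which follows from operator monotonicity of $W_q(x,1)$ for $-1\le q\le 2$ (\cite{Sza2007}, cited in the paper). Second, near $v=1$ the individual terms of $\chi_v$ blow up while $\chi_v$ itself stays bounded (both one-sided limits equal $t\coth t-1$), so the step ``the cone of conditionally negative definite functions is closed under integration'' needs the remark that $\chi_v(t)$ extends continuously in $v$ across $v=1$, making the integral a genuine pointwise limit of Riemann sums of cnd functions. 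Third, the classical facts you lean on --- positive definiteness of $t/\sinh(ct)$, of $\sinh(at)/\sinh(bt)$ for $0\le a\le b$, and of $a^2/(t^2+a^2)$, plus Schoenberg's theorem --- must be cited (they can all be found in \cite{HK2003,K2011}). None of these is a gap in the idea; they are bookkeeping.
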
	
	
\begin{proposition}(\cite{F2014})\label{prop1.2}
	For $S,T\in M_+(n,\mathbb{C})$ and any $X\in M(n,\mathbb{C})$, if $|p|\le 1$, then 
	$$
	\vertiii{\hat{G}_p(S,T)X}\le \vertiii{L(S,T)X} \le \vertiii{\hat{A}_p(S,T)X},
	$$
	where $\hat{G}_p(x,y):=\dfrac{p(xy)^{p/2}(x-y)}{x^p-y^p}$ and $\hat{A}_p(x,y):=\dfrac{p(x^p+y^p)(x-y)}{2(x^p-y^p)}$ for  $x\neq y$.
\end{proposition}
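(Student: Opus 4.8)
The plan is to deduce both inequalities from the Hiai--Kosaki comparison theory already referenced in the Introduction: for symmetric homogeneous means with $N_1\le N_2$, the stronger ordering $N_1\preceq N_2$ --- and hence the unitarily invariant norm inequality $\vertiii{N_1(S,T)X}\le\vertiii{N_2(S,T)X}$ for every $X$ --- holds if and only if the ratio $t\mapsto N_1(e^t,1)/N_2(e^t,1)$ is a positive definite function on $\mathbb{R}$ (see \cite{HK2003,K2011}). So the whole proposition reduces to verifying the positive definiteness of two explicit ratios. Before computing, I would record that $\hat G_p$ and $\hat A_p$ are even in $p$: replacing $p$ by $-p$ and using $x^{-p}-y^{-p}=-(x^p-y^p)/(xy)^p$ gives $\hat G_{-p}=\hat G_p$ and $\hat A_{-p}=\hat A_p$, so it suffices to treat $0\le p\le 1$ (reading the denominator of $\hat A_p$ as $2(x^p-y^p)$). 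The endpoints are a useful sanity check: $\hat G_1=G$, $\hat A_1=A$, and $\hat G_0=\hat A_0=L$, so the claim interpolates the known chain $G\le L\le A$.

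Next I would compute the representing ratios. Using $L(e^t,1)=(e^t-1)/t$ and $e^{pt}-1=e^{pt/2}\cdot 2\sinh(pt/2)$, a direct simplification yields
\[
\frac{\hat G_p(e^t,1)}{L(e^t,1)}=\frac{pt}{2\sinh(pt/2)}=\frac{s}{\sinh s},\qquad
\frac{L(e^t,1)}{\hat A_p(e^t,1)}=\frac{2}{pt}\tanh\frac{pt}{2}=\frac{\tanh s}{s},
\]
where $s:=pt/2$. Each ratio equals $1$ at $t=0$ and is $\le 1$ elsewhere, consistent with the scalar orderings $\hat G_p\le L\le\hat A_p$; the real content is that they are positive definite. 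Since positive definiteness is preserved by the real dilation $t\mapsto pt/2$, it is enough to show that $s\mapsto s/\sinh s$ and $s\mapsto(\tanh s)/s$ are positive definite on $\mathbb{R}$.

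For this I would invoke the classical factorizations. From $\sinh s=s\prod_{n\ge1}\bigl(1+s^2/(n^2\pi^2)\bigr)$ one gets $\dfrac{s}{\sinh s}=\prod_{n\ge1}\dfrac{n^2\pi^2}{n^2\pi^2+s^2}$, where each factor $\dfrac{a^2}{a^2+s^2}$ is the characteristic function of a Laplace law (the Fourier transform of $\tfrac{a}{2}e^{-a|t|}$), hence positive definite; a locally uniformly convergent product of positive definite functions is again positive definite, since the class of positive definite functions is closed under pointwise limits. Likewise the partial-fraction expansion $\tanh s=\sum_{k\ge0}\dfrac{2s}{s^2+(k+\frac12)^2\pi^2}$ gives $\dfrac{\tanh s}{s}=\sum_{k\ge0}\dfrac{2}{s^2+(k+\frac12)^2\pi^2}$, a convergent sum of positive multiples of Cauchy-type positive definite functions, hence positive definite. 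This settles both inequalities, the case $p=0$ being the trivial equality $L=L$.

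The main obstacle I anticipate is not the positive-definiteness computations, which are standard once the ratios are brought to the forms $s/\sinh s$ and $(\tanh s)/s$, but rather pinning down the precise hypotheses under which the Hiai--Kosaki equivalence is applicable: one must check that $\hat G_p$ and $\hat A_p$ are genuine symmetric homogeneous means on the relevant range, so that the representing-function criterion is available, and handle the removable singularities at $t=0$ and $x=y$ so that the representing functions used above are the correct continuous extensions. Verifying that $\hat G_p$ lies between $G$ and $L$ and $\hat A_p$ between $L$ and $A$ for $0\le p\le 1$ --- in particular monotonicity and the betweenness axiom (iii) --- is the routine-but-necessary part that legitimizes the whole reduction.
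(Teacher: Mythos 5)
This paper does not actually prove Proposition \ref{prop1.2}; it is quoted verbatim from \cite{F2014}, and your argument reconstructs essentially the method of that cited source: reduce the unitarily invariant norm inequalities to positive definiteness of the representing ratios via the Hiai--Kosaki equivalence, compute those ratios to be $s/\sinh s$ and $(\tanh s)/s$ with $s=pt/2$, and conclude by positive definiteness (in \cite{F2014} this last step leans on the known relations $G\preceq L\preceq A$ of Hiai--Kosaki, i.e.\ the positive definiteness of these very functions, rather than re-deriving it from the product and partial-fraction expansions as you do). Your computations are correct --- including the correct reading of the obvious typo $2(x^p-x^p)$ as $2(x^p-y^p)$ in the definition of $\hat{A}_p$ and the reduction to $0\le p\le 1$ by evenness in $p$ --- so the proposal is sound and follows essentially the same route as the cited proof.
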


See \cite{F2014} for the details on $\hat{G}_p(x,y)$ and $\hat{A}_p(x,y)$.
From Proposition \ref{prop1.1}, we see $L(x,y) \le W_p(x,y)$ for $0\le p \le 1$. In Section \ref{sec2}, we study the reverse inequalities of $L(x,y) \le W_p(x,y)$. In addition, we compare the obtained results in Section \ref{sec2} with the bounds in Proposition \ref{prop1.2}, in Section \ref{sec3}.
\section{Reverse inequalities}\label{sec2}

For $x>0,t>0$, we have $\ln_{-t} x \le \log x \le \ln_t x$, where $\ln_t x:=\dfrac{x^t-1}{t},\,\,(x>0,\,\,t\neq 0)$. Thus, we have the simple bounds of $W_p,\,\,(0\le p\le 1)$ as
$$
W_p(x,1)\le L(x,1)^2,\,\,\,(x\ge 1),\quad W_p(x,1)\ge L(x,1)^2,\,\,\,(0<x\le 1).
$$

Since $f_p(t):=x^{pt}\log x$ is convex in $t$ when $x\ge 1,\,\,0\le p\le 1$, taking an account for $\int_0^1f_p(t)dt=\ln_px$, we have $x^{p/2}\log x \le \ln_p x \le \left(\dfrac{x^p+1}{2}\right)\log x$ from Hermite--Hadamard inequality. Thus the slightly improved upper bound was obtained under the condition $x \ge 1$:
$$
\frac{4}{(x^p+1)(x^{1-p}+1)}L(x,1)^2\le W_p(x,1)\le \frac{1}{\sqrt{x}}L(x,1)^2,\,\,\,(x\ge 1).
$$
Also,we have the reverse inequality of the above for $0<x\le 1$ since $f_p(t)$ concave in $t$ when $0<x\le 1$.In this section, we study the reverse inequalities of $L(x,y) \le W_p(x,y)$ for {\it all} $x>0$ not restricted as $x\ge 1$ or $0<x\le 1$.

We firstly consider the difference type reverse inequality of $L(x,1)\le W_p(x,1),\,\,(x>0,\,\,0\le p \le 1)$. From the simple calculations, we have
\begin{align}
&W_p(x,1)\le \left(\frac{\sqrt{x}+1}{2}\right)^2\le r\left(\sqrt{x}-1\right)^2+\sqrt{x}\nonumber\\
&\qquad\qquad \le  r\left(\sqrt{x}-1\right)^2+L(x,1),\,\,\,(x>0,\,\,0\le p \le 1,\,\,r\ge 1/4). \label{sec2_eq05}
\end{align}

Considering the parameter $p$, we can obtain the first inequality in the following as a general result.
\begin{theorem}\label{theorem_diff_type}
Let $x>0$. For $0\le p \le 1$, we have
\begin{equation}\label{sec2_eq06}
W_p(x,1)\le  p(1-p)\left(\sqrt{x}-1\right)^2+L(x,1)\le A(x,1).
\end{equation}
\end{theorem}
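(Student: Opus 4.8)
The plan is to prove the two inequalities in \eqref{sec2_eq06} separately, after one reduction. Replacing $x$ by $1/x$ multiplies each of $W_p(x,1)$, $L(x,1)$ and $(\sqrt{x}-1)^2$ by the common factor $1/x$ (this is homogeneity together with $W_p(y,z)=W_p(z,y)$), so \eqref{sec2_eq06} is invariant under $x\mapsto 1/x$ and it suffices to treat $x\ge 1$. The right-hand inequality is then the easy one: since $p(1-p)\le 1/4$ for $p\in[0,1]$, it is enough to show $\tfrac14(\sqrt{x}-1)^2+L(x,1)\le A(x,1)$, and a direct computation gives $A(x,1)-\tfrac14(\sqrt{x}-1)^2=\left(\tfrac{\sqrt{x}+1}{2}\right)^2=W_{1/2}(x,1)$, so this is exactly the known ordering $L(x,1)\le W_{1/2}(x,1)$ recalled in the introduction.

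For the reverse inequality $W_p(x,1)\le p(1-p)(\sqrt{x}-1)^2+L(x,1)$ I would clear denominators. Writing $W_p(x,1)-L(x,1)$ over the common denominator $(x^p-1)(x^{1-p}-1)\log x$, using the factorization $(x^p-1)(x^{1-p}-1)=(x+1)-(x^p+x^{1-p})$ and the identity
\[
(x-1)^2-(\sqrt{x}-1)^2\big[(x+1)-(x^p+x^{1-p})\big]=(\sqrt{x}-1)^2\big(x^{p/2}+x^{(1-p)/2}\big)^2 ,
\]
the claim $W_p(x,1)-L(x,1)\le p(1-p)(\sqrt{x}-1)^2$ becomes, for $x>1$, the single inequality
\[
p(1-p)(\log x)(\sqrt{x}-1)\big(x^{p/2}+x^{(1-p)/2}\big)^2\le(\sqrt{x}+1)(x^p-1)(x^{1-p}-1) .
\]

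Next I would substitute $x=e^{2s}$ with $s\ge 0$ and put $u:=(2p-1)s$; both sides are even in $u$, so it is enough to take $u\in[0,s]$ (that is, $p\in[1/2,1]$). Using the product-to-sum identities $\sinh(ps)\sinh((1-p)s)=\tfrac12(\cosh s-\cosh u)$ and $2\cosh^2(u/2)=1+\cosh u$, and cancelling the common positive factors $e^{3s/2}$, $\cosh(s/2)$ and $1+\cosh u$, the displayed inequality collapses to the one-variable inequality
\[
\frac{s^2-u^2}{2s}\,\tanh\frac{s}{2}\le\frac{\cosh s-\cosh u}{1+\cosh u},\qquad 0\le u\le s .
\]

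The main obstacle is this last inequality, because the reverse inequality is sharp at $p=0,1$ (where $W_p=L$): both sides agree to second order at $u=s$, so any coarse bound such as $W_p\le W_{1/2}$ or the Hermite--Hadamard estimate $(x^p-1)(x^{1-p}-1)\ge\sqrt{x}(\log x)^2$ loses the correct constant $p(1-p)$. I would handle it by showing that
\[
\Phi(u):=\frac{\cosh s-\cosh u}{1+\cosh u}-\frac{s^2-u^2}{2s}\,\tanh\frac{s}{2}
\]
is nonincreasing on $[0,s]$; since $\Phi(s)=0$, this gives $\Phi\ge 0$. Computing $\Phi'$ and using $\tanh(s/2)=\sinh s/(1+\cosh s)$ shows that $\Phi'(u)\le 0$ is equivalent to $\eta(u)\ge\tfrac{\eta(s)}{s}\,u$, where $\eta(Q):=\sinh Q/(1+\cosh Q)^2$; that is, to $Q\mapsto \eta(Q)/Q$ being decreasing on $(0,\infty)$. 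Writing $v=Q/2$ and $\tau=\tanh v$, this monotonicity reduces to the elementary claim $h(v):=\tanh v-v+3v\tanh^2 v>0$ for $v>0$, which is immediate from $h(0)=0$ and $h'(v)=2\tanh^2 v+6v\,\tanh v\,(1-\tanh^2 v)>0$. Granting this single monotonicity lemma, the chain of reductions above yields \eqref{sec2_eq06}.
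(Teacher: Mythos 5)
Your proof is correct; I verified the factorization identity, the hyperbolic reduction, and the computations of $\Phi'$ and $h'$. For the second inequality you argue exactly as the paper does (reduce to $p=1/2$ via $p(1-p)\le\tfrac14$ and use $L(x,1)\le W_{1/2}(x,1)=\bigl(\tfrac{\sqrt{x}+1}{2}\bigr)^2$), but for the first, main inequality your route is genuinely different. The paper, after replacing $\sqrt{x}$ by $x$ and clearing denominators, exploits the symmetry $f(x,p)=f(x,1-p)$ to restrict to $0\le p\le 1/2$ and then proves $\partial f/\partial p\ge 0$ through a cascade of derivatives in $x$ (up to third order) of an auxiliary function $h(x,p)$, concluding $f(x,p)\ge f(x,0)=0$; everything stays with power functions of $x$, at the price of a two-variable argument with nested differentiations. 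You instead use the identity to cancel a factor of $\sqrt{x}-1$, pass to hyperbolic coordinates $x=e^{2s}$, $u=(2p-1)s$ (which makes the $p\leftrightarrow 1-p$ symmetry manifest as evenness in $u$), and collapse the statement to the one-variable inequality $\tfrac{s^2-u^2}{2s}\tanh\tfrac{s}{2}\le\tfrac{\cosh s-\cosh u}{1+\cosh u}$ on $[0,s]$, settled by monotonicity of the difference together with its vanishing at $u=s$; the key lemma, that $Q\mapsto\sinh Q/\bigl(Q(1+\cosh Q)^2\bigr)$ is decreasing, reduces to $\tanh v-v+3v\tanh^2 v>0$, which is immediate. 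Your approach buys a structurally cleaner argument: it isolates a self-contained, reusable monotonicity lemma, and it explains why the sharp constant $p(1-p)$ survives, namely because both sides of the reduced inequality vanish and have equal first derivative at $u=s$ (your phrase ``agree to second order'' should be read in that sense --- the difference is $O((u-s)^2)$ --- since the second derivatives themselves differ). The paper's approach buys directness: no substitutions, only elementary power functions, at the cost of longer derivative chains. Two small caveats, which apply equally to the paper's proof: clearing denominators presupposes $0<p<1$ (at $p\in\{0,1\}$ the statement holds as the limiting equality $W_p=L$), and one should note $x>1$ so that every cancelled factor is strictly positive.
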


\begin{proof}
It is trivial that the equalities hold in the inequalities \eqref{sec2_eq06} for the special case $x=1$.
We also  find that the inequalities 
$$ W_p\left(1/x,1\right)\le  p(1-p)\left(\sqrt{1/x}-1\right)^2+ L(1/x,1)\le  A(1/x,1),\quad (x>0)
$$ are equivalent to the inequalities \eqref{sec2_eq06} by multiplying $x>0$ to both sides. Thus it is sufficient to prove the first inequality in \eqref{sec2_eq06} for $x\ge  1$ to show the first inequality in  \eqref{sec2_eq06} for $x>0$.

In the first inequality of \eqref{sec2_eq06}, put $x$ instead of $\sqrt{x}$. Then the denominator is 
$$2(x^{2p}-1)(x^{2(1-p)}-1)\log x\ge 0$$ 
for $x \ge 1$ when we reduce the difference right hand side minus 
the left hand side to a common denominator for the first inequality in \eqref{sec2_eq06}. Then we have the numerator as $(x-1)f(x,p)$, with
$$
f(x,p):=(x+1)(x^{2p}-1)(x^{2(1-p)}-1)-2p(1-p)(x-1)\left(x^p+x^{1-p}\right)^2\log x
$$
Since $f(x,1-p)=f(x,p)$,we have only to prove $f(x,p)\ge 0$ for $x\ge 1$ and $0\le p\le 1/2$.
We calculate
\begin{eqnarray*}
&& \frac{df(x,p)}{dp}=4(x^{1-2p}+1)(\log x)g(x,p),\quad g(x,p):=h(x,p)+p(1-p)(x-1)(x-x^{2p})\log x\\
&& h(x,p):=px^2-(1-p)x^{2p+1}-px+x-px^{2p},\\
&& \dfrac{dh(x,p)}{dx}=2px-(1-p)(1+2p)x^{2p}-2p^2x^{2p-1}+1-p,\\
&& \dfrac{d^2h(x,p)}{dx^2}=2p\left\{-(1-p)(1+2p)x^{2p-1}+p(1-2p)x^{2p-2}+1\right\},\\
&& \dfrac{d^3h(x,p)}{dx^3}=2p(1-p)(1-2p)x^{2p-3}\left\{2p(x-1)+x\right\}\ge 0,\,\,(x \ge 1,\,\,0\le p \le 1/2),
\end{eqnarray*}
so that we have
$$
\dfrac{d^2h(x,p)}{dx^2} \ge \dfrac{d^2h(1,p)}{dx^2}=0\Longrightarrow \dfrac{dh(x,p)}{dx} \ge \dfrac{dh(1,p)}{dx}=0\Longrightarrow h(x,p) \ge h(1,p)=0.
$$
From $p(1-p)(x-1)(x-x^{2p})\log x\ge 0,\,\,(x \ge 1,\,\,0\le p \le 1/2)$ with the above results, we have $\dfrac{df(x,p)}{dp}\ge 0$ which implies $f(x,p)\ge f(x,0)=0$. Thus, we have proved the first inequality in \eqref{sec2_eq06}.

To prove the second inequality in \eqref{sec2_eq06}, we set
$$
k(x,p):=\frac{x+1}{2}-p(1-p)\left(\sqrt{x}-1\right)^2-\frac{x-1}{\log x},\,\,(x >1).
$$
Then we have
$$
k(x,p)\ge k(x,1/2)=\frac{4-4x+(\sqrt{x}+1)^2\log x}{4\log x}\ge 0.
$$
Indeed, we have $\dfrac{x-1}{\log x}\le \left(\dfrac{\sqrt{x}+1}{2}\right)^2$ which implies
$4-4x+(\sqrt{x}+1)^2\log x \ge 0$ for $x>1$. 
This completes the proof with $k(1,p)=0$.
\end{proof}

For the special case $p=1/2$ in Theorem \ref{theorem_diff_type}, the inequalities in \eqref{sec2_eq06} are reduced to $G(x,1)\le L(x,1) \le B_{1/2}(x,1)$.
Note that the right hand side of the second inequality in \eqref{sec2_eq06} can not be replaced by $W_{1/2}(x,1)$ which is less than or equal to $A(x,1)$.

Secondly we consider the ratio type reverse inequality of $L(x,y) \le W_p(x,y)$. From the known results, we have 
\begin{equation}\label{sec2_eq01}
W_p(x,1)\le \left(\frac{\sqrt{x}+1}{2}\right)^2\le A(x,1)\le S(x)G(x,1)\le S(x)L(x,1),\,\,(x>0,\,\,0\le p \le 1).
\end{equation}
Where $S(x):=\dfrac{x^{\frac{1}{x-1}}}{e\log x^{\frac{1}{x-1}}}$ is Specht ratio \cite{S1960}.
From the relation $K(x):=\dfrac{(x+1)^2}{4x}\ge S(x)$, Specht ratio in \eqref{sec2_eq01} can be replaced by Kantorovich constant $K(x)$ \cite{K1948}. See \cite[Chapter 2]{FM2020book} and references therein for the recent results on the inequalities with Specht ratio and Kantorovich constant. 
Moreover we have the following inequality if we use Kantorovich constant $K(x)$.
\begin{equation}\label{sec2_eq02}
W_p(x,1)\le \left(\frac{\sqrt{x}+1}{2}\right)^2= K(\sqrt{x})\sqrt{x}\le K(\sqrt{x})L(x,1),\,\,(x>0,\,\,0\le p \le 1).
\end{equation}

From \eqref{sec2_eq01} and \eqref{sec2_eq02}, it may be expected that $\left(\dfrac{\sqrt{x}+1}{2}\right)^2\le S(\sqrt{x})\sqrt{x}$. However, this fails. Indeed we have the following proposition.
In this point, we see that the ordering $K(x)\ge S(x)$ is effective.
\begin{proposition}
For $x>0$, the following inequality holds: 
\begin{equation}\label{sec2_eq03}
\left(\frac{\sqrt{x}+1}{2}\right)^2\ge S(\sqrt{x})\sqrt{x}.
\end{equation}
\end{proposition}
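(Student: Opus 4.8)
The plan is to reduce the statement to the ordering $K(\sqrt{x})\ge S(\sqrt{x})$ between the Kantorovich constant and the Specht ratio, which has already been recorded in the excerpt. The key observation is purely algebraic: dividing the desired inequality \eqref{sec2_eq03} by $\sqrt{x}>0$, the left-hand side is nothing but $K(\sqrt{x})$ times $\sqrt{x}$, so the proposition is equivalent to the comparison $K(\sqrt{x})\ge S(\sqrt{x})$.

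Concretely, first I would recall the identity already used in \eqref{sec2_eq02}, namely
\[
\left(\frac{\sqrt{x}+1}{2}\right)^2=\frac{(\sqrt{x}+1)^2}{4}=\frac{(\sqrt{x}+1)^2}{4\sqrt{x}}\cdot\sqrt{x}=K(\sqrt{x})\,\sqrt{x},
\]
where $K(t)=\dfrac{(t+1)^2}{4t}$. Next I would invoke the relation $K(t)\ge S(t)$ for all $t>0$, which is stated earlier in the paper (just before \eqref{sec2_eq02}) together with its references; applying it at $t=\sqrt{x}$ gives $K(\sqrt{x})\ge S(\sqrt{x})$. Multiplying this through by $\sqrt{x}>0$ and combining with the displayed identity yields
\[
\left(\frac{\sqrt{x}+1}{2}\right)^2=K(\sqrt{x})\,\sqrt{x}\ge S(\sqrt{x})\,\sqrt{x},
\]
which is exactly \eqref{sec2_eq03}. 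I would also note that equality holds at $x=1$, since $K(1)=S(1)=1$.

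There is in fact no genuine obstacle here once the reduction is made: the only content is recognizing that the left-hand side factors as $K(\sqrt{x})\sqrt{x}$, after which the proposition is an immediate corollary of the cited ordering $K\ge S$. If instead one wanted a self-contained argument rather than quoting $K\ge S$, the hard part would shift to re-establishing that classical comparison, which one would carry out by setting $t=\sqrt{x}$, taking logarithms, and showing that $\log K(t)-\log S(t)\ge 0$ via a single-variable convexity or monotonicity analysis (with the $t\to 1$ limit handled separately). Since the excerpt explicitly treats $K\ge S$ as known and remarks that this ordering ``is effective,'' the intended proof is the short reduction above rather than a fresh derivation of $K\ge S$.
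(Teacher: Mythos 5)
Your proof is correct, but it is a genuinely different route from the paper's. You exploit the identity $\left(\frac{\sqrt{x}+1}{2}\right)^2=K(\sqrt{x})\,\sqrt{x}$ (which the paper itself displays inside \eqref{sec2_eq02}) to reduce \eqref{sec2_eq03}, after dividing by $\sqrt{x}>0$ and substituting $t=\sqrt{x}$, to the classical ordering $K(t)\ge S(t)$ for $t>0$; since the paper states that ordering with references just before \eqref{sec2_eq02}, and it is proved in the cited literature independently of this proposition, there is no circularity and your two-line argument is sound. The paper instead gives a self-contained analytic proof: it rewrites \eqref{sec2_eq03} as the equivalent inequality \eqref{sec2_eq04} via the substitution $\sqrt{x}\mapsto x$, restricts to $x>1$ by the same symmetry device used in Theorem \ref{theorem_diff_type}, takes logarithms to form
$f(x):=2\log\left(\frac{x+1}{2}\right)-\log(x-1)-\frac{x}{x-1}\log x+1+\log\left(\log x\right)$,
and proves $f'(x)=\frac{1}{x(x-1)}\left(\frac{x-1}{\log x}+\frac{x\log x}{x-1}-\frac{4x}{x+1}\right)\ge 0$ by invoking the elementary mean inequalities $L(x,1)\ge H(x,1)$ and $L(x,1)\le A(x,1)$, whence $f(x)\ge f(1)=0$. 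The trade-off: your reduction is far shorter and makes transparent the paper's otherwise cryptic remark that ``the ordering $K(x)\ge S(x)$ is effective,'' but it outsources all analytic content to the quoted fact $K\ge S$; the paper's calculus proof is self-contained and, read backwards through your equivalence, actually furnishes an independent proof of $K\ge S$ itself. Note that the paper partially anticipates your viewpoint: immediately after its proof it observes that \eqref{sec2_eq03} can also be obtained from \cite[Theorem 2.10.1]{FM2020book}, a derivation in the same spirit as, though less direct than, yours.
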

\begin{proof}
When $x=1$, we have equality of \eqref{sec2_eq03} since $S(1)=1$.
The inequality \eqref{sec2_eq03} is equivalent to the following inequality:
\begin{equation}\label{sec2_eq04}
\frac{(x-1)x^{\frac{x}{x-1}}}{e\log x}\le \left(\frac{x+1}{2}\right)^2.
\end{equation}
By the similar reason as we stated in the beginning of the proof in Theorem \ref{theorem_diff_type},
it is sufficient to prove \eqref{sec2_eq04} for $x>1$. Taking a logarithm of both sides in \eqref{sec2_eq04} and considering its difference:
$$
f(x):=2\log\left(\frac{x+1}{2}\right)-\log(x-1)-\frac{x}{x-1}\log x+1+\log\left(\log x\right).
$$
Since $L(x,1) \ge H(x,1)$ and $L(x,1)^{-1}\ge A(x,1)^{-1}$ for $x>0$,
we have 
$$
f'(x)=\frac{1}{x(x-1)}\left(\frac{x-1}{\log x}+\frac{x\log x}{x-1}-\frac{4x}{x+1}\right)\ge 0,\,\,(x>1).
$$
Thus, we have $f(x)\ge f(1)=0$.
\end{proof}

It is notable that
the inequality \eqref{sec2_eq03} can be also obtaind by putting $v=1/2$ in \cite[Theorem 2.10.1]{FM2020book}, taking a square the both sides and then replacing $x$ by $\sqrt{x}$.

The following result is the ratio type reverse inequality of $L(x,y) \le W_p(x,y)$ for $0\le p \le 1$.
\begin{theorem}\label{theorem2.1}
For $x>0,\,\,0\le p\le 1$, we have
\begin{equation}\label{sec2_eq07}
W_p(x,1)\le K(x)^{p(1-p)} L(x,1).
\end{equation}
\end{theorem}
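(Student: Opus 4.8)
The plan is to recast the inequality through the substitution $x=e^{2s}$ (so $s=\tfrac12\log x$), which converts all three quantities into hyperbolic expressions and exposes a great deal of cancellation. Writing $\sigma(z):=\dfrac{\sinh z}{z}$, a direct computation using $x-1=2e^{s}\sinh s$ and $x^{p}-1=2e^{ps}\sinh(ps)$ gives $W_p(x,1)=e^{s}\,\dfrac{p(1-p)\sinh^2 s}{\sinh(ps)\sinh((1-p)s)}$, $L(x,1)=e^{s}\,\sigma(s)$ and $K(x)=\cosh^2 s$. Dividing and cancelling $e^{s}$, the target inequality \eqref{sec2_eq07} becomes
\begin{equation*}
\frac{\sigma(s)}{\sigma(ps)\,\sigma((1-p)s)} \le (\cosh s)^{2p(1-p)}.
\end{equation*}
Every factor appearing here is even in $s$, so it suffices to treat $s\ge 0$ (equivalently $x\ge 1$) and, pleasantly, no separate $p\leftrightarrow 1-p$ reduction is required.

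Next I would take logarithms, reducing the claim to $\log\sigma(s)-\log\sigma(ps)-\log\sigma((1-p)s)\le 2p(1-p)\log\cosh s$. The structural fact I would exploit is the Weierstrass product $\sigma(z)=\prod_{k\ge1}\left(1+\tfrac{z^2}{k^2\pi^2}\right)$, so that $g(w):=\log\sigma(\sqrt w)=\sum_{k\ge1}\log\left(1+\tfrac{w}{k^2\pi^2}\right)$ is a concave function of $w\ge0$ with $g(0)=0$. Concavity through the origin yields the super-homogeneity $g(\lambda w)\ge\lambda g(w)$ for $\lambda\in[0,1]$; applying this with $\lambda=p^2$ and $\lambda=(1-p)^2$ at $w=s^2$ gives $\log\sigma(ps)\ge p^2\log\sigma(s)$ and $\log\sigma((1-p)s)\ge(1-p)^2\log\sigma(s)$. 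Substituting, the left-hand side is bounded above by $\bigl(1-p^2-(1-p)^2\bigr)\log\sigma(s)=2p(1-p)\log\sigma(s)$.

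It then remains only to compare $\log\sigma(s)$ with $\log\cosh s$. Since $\tanh s\le s$ for $s\ge0$, we have $\sigma(s)=\tfrac{\sinh s}{s}\le\cosh s$; and as $\log\sigma(s)\ge0$ with $2p(1-p)\ge0$, multiplying through preserves the inequality and produces exactly the bound $2p(1-p)\log\cosh s$. Exponentiating and undoing the substitution recovers \eqref{sec2_eq07}. I expect the only genuine obstacle to be justifying the concavity of $g$: the product formula makes it transparent, but if one wishes to avoid infinite products one must instead verify $\tfrac{\sinh(ps)}{ps}\ge\left(\tfrac{\sinh s}{s}\right)^{p^2}$ directly, which is a two-variable monotonicity calculation of exactly the type already used in the proof of Theorem \ref{theorem_diff_type}.
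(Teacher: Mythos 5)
Your proof is correct, and it takes a genuinely different route from the paper's. The paper proves \eqref{sec2_eq07} by direct calculus: after reducing to $x>1$ it takes logarithms, differentiates the difference $f(x,p)$ in $x$, discards two nonnegative terms to get a lower bound $g(x,p)$ whose numerator $h(x,p)=(1-p)(1+x^{2p})+p(x+x^{2p-1})-2x^p$ is shown nonnegative by two applications of the AM--GM inequality, and concludes $f(x,p)\ge f(1,p)=0$. You instead exploit structure: the substitution $x=e^{2s}$ correctly yields $W_p(x,1)=e^s\,p(1-p)\sinh^2 s/\bigl(\sinh(ps)\sinh((1-p)s)\bigr)$, $L(x,1)=e^s\sigma(s)$ and $K(x)=\cosh^2 s$, so the claim becomes $\sigma(s)/\bigl(\sigma(ps)\,\sigma((1-p)s)\bigr)\le(\cosh s)^{2p(1-p)}$; the Euler product $\sigma(z)=\prod_{k\ge1}\bigl(1+z^2/(k^2\pi^2)\bigr)$ makes $w\mapsto\log\sigma(\sqrt{w})$ concave with value $0$ at $0$, hence super-homogeneous, giving $\sigma(ps)\ge\sigma(s)^{p^2}$ and $\sigma((1-p)s)\ge\sigma(s)^{(1-p)^2}$, and then $\sigma(s)\le\cosh s$ (i.e.\ $\tanh s\le s$) finishes, the exponent appearing for the transparent reason $1-p^2-(1-p)^2=2p(1-p)$. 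All steps check out: evenness in $s$ legitimately replaces the usual $x\mapsto 1/x$ reduction, and the endpoints $p\in\{0,1\}$ degenerate to equality. What your route buys is more than the theorem: the intermediate step actually proves the sharper bound $W_p(x,1)\le\bigl(L(x,1)/G(x,1)\bigr)^{2p(1-p)}L(x,1)$, which implies \eqref{sec2_eq07} since $L\le A$ gives $L/G\le A/G=\sqrt{K}$; it also explains conceptually where the exponent $p(1-p)$ comes from, and it sits naturally in the hyperbolic-function framework of Hiai--Kosaki cited in the paper. What it costs is self-containedness: the concavity rests on the infinite product for $\sinh z/z$, a classical but imported fact (your suggested fallback, proving $\sinh(ps)/(ps)\ge(\sinh s/s)^{p^2}$ directly, is indeed equivalent and would restore an elementary argument), whereas the paper's proof, though less illuminating, uses nothing beyond calculus and AM--GM.
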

\begin{proof}
For $x= 1$, we have equality in \eqref{sec2_eq07}. So it is sufficient to prove \eqref{sec2_eq07} for $x>1$. Take a logarithm of the both sides in \eqref{sec2_eq07} and put the function $f(x,p)$ as its difference, namely 
\begin{eqnarray*}
f(x,p)&:=&-\log(x-1)-\log\left(\log x\right)+2p(1-p)\log (x+1)-p(1-p)\log 4x \\
&&-\log p -\log (1-p) +\log(x^p-1)+\log (x^{1-p}-1).
\end{eqnarray*}
We calculate
\begin{eqnarray*}
\frac{df(x,p)}{dx}&=&-\frac{1}{x-1}-\frac{p(1-p)}{x}+\frac{2p(1-p)}{x+1}+\frac{px^{p-1}}{x^p-1}+\frac{p-1}{x^p-x}+\frac{1}{x\log x}\\
&=&-\frac{1}{x-1}+p(1-p)\frac{(x-1)}{x(x+1)}+\frac{1}{x^{1-p}\ln_p x}+\frac{1}{x^p\ln_{1-p}x}+\frac{1}{x\log x}\\
&\ge & -\frac{1}{x-1}+\frac{1}{x^{1-p}\ln_p x}+\frac{1}{x^p\ln_{1-p}x}=:g(x,p)
\end{eqnarray*}
and
$$
g(x,p)=\frac{h(x,p)}{(x-1)(x^p-1)(x-x^p)}\ge 0,\,\,\,(x> 1,\,\,0\le p\le 1).
$$
Indeed, 
\begin{eqnarray*}
h(x,p):&=&-(x^p-1)(x-x^p)+px^{p-1}(x-1)(x-x^p)+(1-p)(x-1)(x-x^p)\\
&=&(1-p)+px-2x^p+(1-p)x^{2p}+px^{2p-1}\\
&=&(1-p)(1+x^{2p})+p(x+x^{2p-1})-2x^p\\
&\ge & 2(1-p) x^p+ 2px^p-2x^p =0.
\end{eqnarray*}
Therefore we have $f(x,p)\ge f(1,p)=0$.
\end{proof}

\begin{remark}
It is natural to consider the replacement $K(x)$ by $S(x)$ in \eqref{sec2_eq07}. However we have not prove
$$
W_p(x,1)\le S(x)^{p(1-p)} L(x,1),\,\,\,\,(x>0,\,\,\,0\le p\le 1).
$$ 
We also have not found any counter-example of the above inequality. 

It is known that $S(x)\le K(x)$, $S(x^r)\le S(x)^r$ and $K(x^r)\le K(x)^r$ for $x>0$ and $0\le r \le 1$ \cite[Section 2.10]{FM2020book}. Also it is known that both $K(x)$ and $S(x)$ are decreasing for $0<x<1$ and increasing $x>1$ with $S(1)=K(1)=1$.

We are interested to find the smaller constant depending $p\in [0,1]$ than $K(x)^{p(1-p)}$.
By the numerical computations we found the counter-example for
$$
W_p(x,1)\le K(x^p) L(x,1),\,\,\,\,(x>0,\,\,\,0\le p\le 1).
$$ 
Thus the inequalities $W_p(x,1)\le S(x^{p}) L(x,1)$ and $W_p(x,1)\le K(x^{p(1-p)}) L(x,1)$ do not hold in general.
\end{remark}

\section{Comparisons of the bounds for logarithmic mean}\label{sec3}
From Proposition \ref{prop1.1} and Theorem \ref{theorem2.1}, we have
\begin{equation}\label{bounds001}
K(x)^{-p(1-p)}W_p(x,1)\le L(x,1)\le W_p(x,1),\,\,\,(x>0,\,\,\,0\le p\le 1).
\end{equation}
On the other hand, from Proposition \ref{prop1.2}, we have
\begin{equation}\label{bounds002}
\hat{G}_p(x,1)\le L(x,1) \le \hat{A}_p(x,1),\,\,\,(x>0,\,\,\,0\le p\le 1).
\end{equation}

In this section, we compare the bounds of $L(x,1)$ in \eqref{bounds001} and  \eqref{bounds002}.
The first result is the comparison on the upper bounds of $L(x,1)$.
\begin{theorem}\label{theorem3.1}
Let $x>0$ and $p\in \mathbb{R}$.
\begin{itemize}
\item[(i)] If $p \in [0, 1/2]$, then $W_p(x,1)\ge \hat{A}_p(x,1)$.
\item[(ii)] If $p \notin (0, 1/2)$, then $W_p(x,1)\le \hat{A}_p(x,1)$.
\end{itemize}
\end{theorem}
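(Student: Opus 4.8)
The plan is to reduce both parts to the sign of a single auxiliary function and then run a one–variable ($p$) analysis. Since $W_p$ and $\hat{A}_p$ are, for every real $p$, symmetric and positively homogeneous of degree one, the substitution $x\mapsto 1/x$ used at the beginning of the proof of Theorem~\ref{theorem_diff_type} reduces everything to $x>1$, with equality at $x=1$. So I would fix $x>1$ for the rest.

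Clearing denominators, both $W_p(x,1)$ and $\hat{A}_p(x,1)$ are positive for $x>1$, and cancelling the common positive factor $p(x-1)/(x^p-1)$ gives
\[
\frac{W_p(x,1)}{\hat{A}_p(x,1)}=\frac{2(1-p)(x-1)}{(x^{1-p}-1)(x^p+1)} .
\]
Hence the whole comparison is governed by
\[
E(x,p):=2(1-p)(x-1)-(x^{1-p}-1)(x^p+1)=(1-2p)(x-1)+x^p-x^{1-p}.
\]
The delicate point is that the sign of the denominator equals the sign of $x^{1-p}-1$, which is positive for $p<1$ and negative for $p>1$. Thus for $p<1$ one has $W_p\ge\hat{A}_p\iff E\ge0$, while for $p>1$ the inequality reverses and $W_p\le\hat{A}_p\iff E\ge0$. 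At the seam $p=1$ I would simply observe $W_1(x,1)=L(x,1)\le A(x,1)=\hat{A}_1(x,1)$.

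It then remains to pin down the sign of $p\mapsto E(x,p)$. I would first record the three roots $E(x,0)=E(x,\tfrac12)=E(x,1)=0$ and the antisymmetry $E(x,1-p)=-E(x,p)$. Differentiating,
\[
\frac{\partial E}{\partial p}=-2(x-1)+(\log x)(x^p+x^{1-p}),\qquad
\frac{\partial^2 E}{\partial p^2}=(\log x)^2\,(x^p-x^{1-p}),
\]
so $\partial E/\partial p$ is symmetric about $p=\tfrac12$, strictly decreasing for $p<\tfrac12$ and strictly increasing for $p>\tfrac12$; that is, it is unimodal with a single minimum at $p=\tfrac12$. The key estimate is that this minimum is negative, i.e. $\sqrt{x}\,\log x<x-1$ for $x>1$; setting $t=\sqrt{x}$ this becomes $2t\log t<t^2-1$, which holds because $t^2-1-2t\log t$ vanishes at $t=1$ and has derivative $2(t-1-\log t)>0$ for $t>1$. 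Consequently $\partial E/\partial p$ has exactly two zeros, symmetric about $\tfrac12$, which forces $E$ to be monotone on each of the resulting intervals. Together with the three known roots $0,\tfrac12,1$ this pins down $E\ge0$ on $[0,\tfrac12]$ and on $[1,\infty)$, and $E\le0$ on $(-\infty,0]$ and on $[\tfrac12,1]$. Reading these signs back through the sign of $x^{1-p}-1$ yields exactly (i) for $p\in[0,\tfrac12]$ and (ii) for $p\in(-\infty,0]\cup[\tfrac12,\infty)$.

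The main obstacle I expect is the sign bookkeeping coupling $E$ to the factor $x^{1-p}-1$: it is precisely the reversal of this factor at $p=1$ that turns the symmetric sign pattern of $E$ into the asymmetric hypotheses $p\in[0,\tfrac12]$ versus $p\notin(0,\tfrac12)$, so this must be tracked carefully. The quantitative heart of the argument is establishing the full unimodal shape of $\partial E/\partial p$ with negative minimum, rather than merely evaluating $E$ at isolated points, since that shape is what guarantees $E$ has no zeros beyond $0,\tfrac12,1$ and hence cannot change sign inside each interval.
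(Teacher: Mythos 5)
Your proof is correct, but the core of it runs along a genuinely different axis than the paper's. Both arguments start identically: reduce to $x>1$ by symmetry and homogeneity, clear denominators, and observe that the comparison is governed by the sign of one expression --- your $E(x,p)$ is exactly $(1-p)f_p(x)$, where $f_p(x):=2(x-1)-\frac{(x^p+1)(x^{1-p}-1)}{1-p}$ is the paper's auxiliary function; the paper's remark that $\frac{x^{1-p}-1}{1-p}\ge 0$ for $x\ge1$ does the same sign bookkeeping that you do by tracking $x^{1-p}-1$ across $p=1$ (absorbing the factor $1-p$ lets the paper state a single equivalence $W_p\ge\hat{A}_p\iff f_p\ge0$ valid for all $p\ne1$, whereas you must flip the inequality at $p=1$; both are fine). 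The difference is how the sign is then established. The paper fixes $p$ and differentiates twice in $x$: $f_p''(x)=px^{-p-1}(1-x^{2p-1})$ has an evident sign --- nonnegative exactly when $p\in[0,1/2]$, nonpositive otherwise --- and since $f_p(1)=f_p'(1)=0$, two integrations finish the proof in a few lines. You instead fix $x$ and analyze $E$ in the variable $p$: three explicit roots $0,\tfrac12,1$, antisymmetry about $\tfrac12$, unimodality of $\partial E/\partial p$ with negative minimum (which needs the extra lemma $\sqrt{x}\log x<x-1$, i.e.\ $G<L$), and a root-counting step. That step deserves to be spelled out: since $E$ is strictly monotone on each of the three intervals cut out by the two zeros $p_1<p_2$ of $\partial E/\partial p$, each interval contains at most one root of $E$, so the three distinct roots must lie one in the interior of each interval, forcing $0<p_1<\tfrac12<p_2<1$ and hence the asserted sign chart. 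Your route is longer, but it buys more than the paper's: a complete description of the sign of $E$ over all real $p$, showing the inequalities in (i) and (ii) are strict for $x\ne1$ and $p\notin\{0,\tfrac12,1\}$, i.e.\ that the parameter ranges in the theorem are exactly right; the paper's convexity-in-$x$ argument is shorter but yields only the stated one-sided inequalities.
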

\begin{proof}
It is sufficient to prove for $x\ge 1$. Taking  account of $\dfrac{x^{1-p}-1}{1-p}\ge 0$ for $x \ge 1$, we set
$$
f_p(x):=2(x-1)-\frac{(x^p+1)(x^{1-p}-1)}{(1-p)}=\left(2-\frac{1}{1-p}\right)(x-1)+\frac{x^p-x^{1-p}}{1-p}.
$$
Since we have
$$
f_p'(x)=\left(2-\frac{1}{1-p}\right)+\frac{px^{p-1}-(1-p)x^{-p}}{1-p},\,\,\,f_p''(x)=px^{-p-1}(1-x^{2p-1}),
$$
we have $f_p''(x)\ge 0$ for $p \in [0, 1/2]$. Thus, we have $f_p'(x)\ge f_p'(1)=0$ which implies $f_p(x)\ge f_p(1)=0$. Therefore we obtain (i). Similarly we have $f_p''(x)\le 0$ for $p \notin (0, 1/2)$. Thus, we have $f_p'(x)\le f_p'(1)=0$ which implies $f_p(x)\le f_p(1)=0$. Therefore we obtain (ii).
\end{proof} 

The second result is the comparison on the lower bounds of $L(x,1)$.
To prove it, we prepare the following lemma which is interesting itself.
\begin{lemma}\label{lemma3.1}
For $x> 0$, we have
\begin{equation}\label{lemma3.1_00}
\frac{L(x,1)^2-G(x,1)^2}{2L(x,1)^2}\le \frac{A(x,1)-L(x,1)}{L(x,1)}\le \log K(x).
\end{equation}
\end{lemma}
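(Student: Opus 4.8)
The plan is to clear denominators and pass to a hyperbolic substitution that trivializes all four means at once. Each of the three quantities in \eqref{lemma3.1_00} is invariant under $x\mapsto 1/x$: since $L(1/x,1)=\frac1x L(x,1)$, $A(1/x,1)=\frac1x A(x,1)$, $G(1/x,1)^2=\frac1x$ and $K(1/x)=K(x)$, the ratios $G(x,1)^2/L(x,1)^2$, $A(x,1)/L(x,1)$ and $\log K(x)$ are unchanged, so it suffices to treat $x\ge 1$. I then set $x=e^{2t}$ with $t\ge 0$, for which $G(x,1)=e^t$, $L(x,1)=e^t\frac{\sinh t}{t}$, $A(x,1)=e^t\cosh t$ and $K(x)=\cosh^2 t$; consequently $\frac{G(x,1)^2}{L(x,1)^2}=\frac{t^2}{\sinh^2 t}$, $\frac{A(x,1)}{L(x,1)}=t\coth t$ and $\log K(x)=2\log\cosh t$. (These three are even in $t$, matching the reduction to $t\ge 0$.)

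With this substitution the left inequality $\frac{L(x,1)^2-G(x,1)^2}{2L(x,1)^2}\le\frac{A(x,1)-L(x,1)}{L(x,1)}$ becomes, after clearing denominators, the elementary inequality $F(t):=3\sinh^2 t-t\sinh 2t-t^2\le 0$ for $t\ge 0$. I would prove this by repeated differentiation: one gets $F(0)=F'(0)=F''(0)=0$ and $F'''(t)=-8t\cosh 2t\le 0$, so $F''$ decreases from $0$ and stays $\le 0$, whence $F'\le 0$ and finally $F\le 0$.

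For the right inequality $\frac{A(x,1)-L(x,1)}{L(x,1)}\le\log K(x)$ the substitution gives $\psi(t):=2\log\cosh t-t\coth t+1\ge 0$, where $\psi(0)=0$ (using $t\coth t\to 1$ as $t\to 0$). I would establish $\psi'(t)\ge 0$: writing $\psi'(t)=2\tanh t-\coth t+t/\sinh^2 t$ over the common denominator $\cosh t\,\sinh^2 t$ and substituting $\cosh^2 t=1+\sinh^2 t$, the numerator simplifies to $N(t):=\sinh^3 t-\sinh t+t\cosh t$. Since $N(0)=0$ and $N'(t)=3\sinh^2 t\cosh t+t\sinh t\ge 0$, we have $N\ge 0$, hence $\psi'\ge 0$ and $\psi(t)\ge\psi(0)=0$, as required.

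The step I expect to be the main obstacle is the right inequality. In contrast to $F$, the derivative $\psi'(t)$ has no obvious sign --- its first term is positive while the other two are negative --- so the decisive move is the algebraic collapse of its numerator, via $\cosh^2 t=1+\sinh^2 t$, to the manifestly increasing function $N$. The remaining work in both parts is the routine pattern of checking that low-order derivatives vanish at $t=0$ and that one further derivative has a fixed sign.
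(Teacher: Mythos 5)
Your proof is correct: the reduction to $x\ge 1$ via invariance under $x\mapsto 1/x$ is valid, the hyperbolic values $G=e^{t}$, $L=e^{t}\sinh t/t$, $A=e^{t}\cosh t$, $K=\cosh^{2}t$ under $x=e^{2t}$ are right, and I verified all the derivative computations ($F'(t)=2\sinh 2t-2t\cosh 2t-2t$, $F''(t)=2\cosh 2t-4t\sinh 2t-2$, $F'''(t)=-8t\cosh 2t$, and $N'(t)=3\sinh^{2}t\cosh t+t\sinh t$). The paper follows the same skeleton --- reduce to $x\ge1$, clear denominators, then iterate derivatives anchored at the endpoint --- but works directly in the variable $x$: it sets $u(x):=2(x-1)-(x+1)\log x+4(x-1)\log(x+1)-2(x-1)\log(4x)$ for the right inequality and $v(x):=(x^2-1)\log x+x(\log x)^2-3(x-1)^2$ for the left one, proving $u\ge0$ via $u''\ge0$ and $v\ge0$ via the four-stage chain $v^{(3)}(x)=\frac{2}{x^{3}}w(x)$, $w'\ge0$. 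In fact your functions are exactly the paper's after the change of variable: $v(e^{2t})=-4e^{2t}F(t)$ and $u(e^{2t})=4e^{t}\sinh t\,\psi(t)$, so the content is identical and only the execution differs. What your substitution buys is cleaner calculus: every differentiation stays inside hyperbolic polynomials whose signs are transparent (e.g.\ $F'''=-8t\cosh 2t$ immediately, versus the paper's rational functions of $x$ and $\log x$), and the collapse of the numerator of $\psi'$ via $\cosh^{2}t=1+\sinh^{2}t$ replaces the paper's convexity computation for $u''$. What the paper's formulation buys is that its byproducts \eqref{lemma3.1_02} and \eqref{lemma3.1_01} are already stated in the $x$-form that is quoted verbatim later in the proof of Theorem \ref{theorem3.2}; from your version one would need to undo the substitution to recover them, which is trivial but worth recording if your proof were to replace the paper's.
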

\begin{proof}
It is sufficient to prove \eqref{lemma3.1_00} for $x\ge 1$. We firstly prove the second inequality.
To this end, we set
$$
u(x):=2(x-1)-(x+1)\log x +4(x-1)\log (x+1)-2(x-1)\log (4x),\,\,\,(x\ge 1).
$$
We calculate
\begin{eqnarray*}
&&u'(x)=\frac{4 x}{x+1}-\frac{4}{x+1}+\frac{1}{x}-1 -3 \log x +4 \log (x+1)-4 \log 2\\
&&u''(x)=\frac{(x-1)(x^2+6x+1)}{x^2 (x+1)^2}\ge 0.
\end{eqnarray*}
Thus, we have
$$
u'(x)\ge u'(1)=0\Longrightarrow u(x)\ge u(1)=0.
$$

We secondly prove the first inequality. To this end, we set
$$
v(x):=(x^2-1)\log x+x(\log x)^2-3(x-1)^2,\quad (x\ge 1).
$$
We calculate
\begin{eqnarray*}
&&v'(x)=2(x+1)\log x+(\log x)^2-5x+6-\frac{1}{x},\quad v''(x)=\frac{1}{x^2}\left(2x(x+1)\log x-3x^2+2x+1 \right)\\
&&v^{(3)}(x)=\frac{2}{x^3}w(x),\quad w(x):=x^2-1-x\log x,\quad w'(x)=2x-1-\log x\ge 0.
\end{eqnarray*}
Thus, we have
$$
w(x)\ge w(1)=0\Longrightarrow v^{(3)}(x)\ge 0\Longrightarrow v''(x)\ge v''(1)=0\Longrightarrow v'(x)\ge v'(1)=0\Longrightarrow v(x)\ge v(1)=0.
$$
Therefore we obtain $3L(x,1)\le 2A(x,1)+G(x,1)^2/L(x,1),\,\,\,(x>0)$ which is equivalent to the first inequality of \eqref{lemma3.1_00}.
\end{proof}

Here, the second inequality of \eqref{lemma3.1_00} is equivalent to the inequality:
\begin{equation}\label{lemma3.1_02}
1-\frac{(x+1)\log x}{2(x-1)}+\log \left(\frac{(x+1)^2}{4x}\right)\ge 0
\end{equation}
Also the inequality $\dfrac{L(x,1)^2-G(x,1)^2}{2L(x,1)^2}\le \log K(x)$ is equivalent to the inequality:
\begin{equation}\label{lemma3.1_01}
1-\frac{x (\log x)^2}{(x-1)^2}+2 \log \left(\frac{4 x}{(x+1)^2}\right)\le 0.
\end{equation}

The inequalities \eqref{lemma3.1_02} and \eqref{lemma3.1_01} will be used in the proof of Theorem \ref{theorem3.2} below.

\begin{theorem}\label{theorem3.2}
Let $x>0$ and $p\in \mathbb{R}$.
\begin{itemize}
\item[(i)] If $0\le p \le \dfrac{1}{2}$, then $\hat{G}_p(x,1)\ge K(x)^{-p(1-p)}W_p(x,1)$.
\item[(ii)] If $p\le 0$ or $p\ge 1$, then $\hat{G}_p(x,1)\le K(x)^{-p(1-p)}W_p(x,1)$.
\end{itemize}
\end{theorem}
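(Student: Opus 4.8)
The plan is to reduce to $x\ge 1$ and then analyze everything as a function of the single parameter $p$, keeping $x$ fixed. First I would observe that the whole comparison is invariant under $x\mapsto 1/x$: since $W_p$ and $\hat G_p$ are homogeneous symmetric means and $K(1/x)=K(x)$, substituting $1/x$ reproduces the same inequality, so it suffices to treat $x\ge 1$, with equality throughout at $x=1$. Taking logarithms, set $\psi(p):=\log\dfrac{W_p(x,1)}{\hat G_p(x,1)}$ and $\Phi(p):=\psi(p)-p(1-p)\log K(x)$, so that claim (i) reads $\Phi\le 0$ and claim (ii) reads $\Phi\ge 0$, with $\Phi(0)=\psi(0)=0$. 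Dividing by $p(1-p)$ (positive on $(0,1/2]$, negative on $(-\infty,0)\cup(1,\infty)$) shows that both claims are in fact the single inequality $\dfrac{\psi(p)}{p(1-p)}\le\log K(x)$.

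The two lemma inequalities enter through the low-order behaviour of $\psi$ at $p=0$. A direct differentiation gives $\psi''(p)=\dfrac{1}{(1-p)^2}\Big(\dfrac{t^2e^{t}}{(e^{t}-1)^2}-1\Big)$ with $t=(1-p)\log x$, and since $\dfrac{t^2e^{t}}{(e^{t}-1)^2}=\big(\frac{t/2}{\sinh(t/2)}\big)^2\le 1$, the function $\psi$ is concave in $p$ on all of $\mathbb{R}$. Evaluating at $p=0$ yields exactly $\psi'(0)=\dfrac{A(x,1)-L(x,1)}{L(x,1)}$ and $\psi''(0)=-\dfrac{L(x,1)^2-G(x,1)^2}{L(x,1)^2}$. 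Therefore \eqref{lemma3.1_02} is precisely $\psi'(0)\le\log K(x)$, i.e. $\Phi'(0)\le 0$, and \eqref{lemma3.1_01} is precisely $\psi''(0)\ge -2\log K(x)$, i.e. $\Phi''(0)\ge 0$. These are the two facts I would quote from Lemma~\ref{lemma3.1}.

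I would then finish by a sign analysis of $\Phi$ built around $p=0$. The usable boundary data are $\Phi(0)=0$, $\Phi'(0)\le 0$, $\Phi(1/2)=\log\dfrac{\sqrt x+1}{2x^{1/4}}-\tfrac14\log K(x)\le 0$ (which is just $(\sqrt x-1)^2\ge 0$), and $\Phi(1)=\log\dfrac{L(x,1)}{G(x,1)}\ge 0$ with $\Phi'(1)=\log K(x)\ge 0$. On $(-\infty,0]$ and on $[0,1/2]$ I expect $\Phi$ to be convex; then $\Phi'(0)\le 0$ forces $\Phi$ to decrease through $0$, so $\Phi\ge 0$ for $p\le 0$, while on $[0,1/2]$ convexity places $\Phi$ below the chord joining $\Phi(0)=0$ and $\Phi(1/2)\le 0$, giving $\Phi\le 0$. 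The branch $p\ge 1$ is different: here $p(1-p)<0$, so $\frac{\psi(p)}{p(1-p)}\to-\infty$ as $p\to 1^{+}$ and $\to 0^{+}$ as $p\to\infty$, and I would instead bound $\frac{\psi(p)}{p(1-p)}$ directly (it is negative until the second zero of $\psi$ and tends to $0$ thereafter) to obtain $\le\log K(x)$.

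The hard part will be upgrading the single-point curvature bound $\Phi''(0)\ge 0$ to the curvature information needed on the whole admissible range, namely $-\psi''(p)\le 2\log K(x)$, equivalently $(\log x)^2\,m(x^{1-p})\le 2\log K(x)$ with $m(z)=\frac{1}{(\log z)^2}-\frac{z}{(z-1)^2}$. At $p=0$ this is exactly \eqref{lemma3.1_01}, but for general $p$ the naive reduction—applying \eqref{lemma3.1_01} at the shifted argument $x^{1-p}$ and then invoking $K(x^{r})\le K(x)^{r}$—loses a factor $1/(1-p)$ and does not close. I expect this estimate to require its own monotonicity argument in $p$, exploiting the symmetry $m(z)=m(1/z)$ and the location of the maximum of $m$; it is also where the excluded window $1/2<p<1$ reveals itself, since there $\Phi''$ can be negative (for large $x$) and the desired inequality genuinely reverses.
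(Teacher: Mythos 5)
Your reduction to $x\ge 1$, the definitions of $\psi$ and $\Phi$, the formula $\psi''(p)=\frac{1}{(1-p)^2}\bigl(\frac{t^2e^t}{(e^t-1)^2}-1\bigr)$ with $t=(1-p)\log x$, the identities $\psi'(0)=\frac{A(x,1)-L(x,1)}{L(x,1)}$, $\psi''(0)=-\frac{L(x,1)^2-G(x,1)^2}{L(x,1)^2}$, and the endpoint facts $\Phi(1/2)\le 0$, $\Phi(1)\ge 0$, $\Phi'(1)=\log K(x)$ are all correct, and for $p\le 0$ your plan is in fact the paper's own strategy (the paper also fixes $x$, differentiates in $p$, and uses \eqref{lemma3.1_01}, \eqref{lemma3.1_02} to pin $\frac{d^2f_p}{dp^2}$ and $\frac{df_p}{dp}$ at $p=0$). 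But the proof is not complete: the convexity of $\Phi$ on $(-\infty,0]$ and on $[0,1/2]$, on which your whole sign analysis rests, is never established -- you explicitly defer it as ``the hard part.'' That step is exactly where the paper spends its effort in case (b): it proves $\frac{d^3f_p(x)}{dp^3}\ge 0$ for $p<1$ (equivalently, $\psi''$ is decreasing in $p$) via the auxiliary function $s(x,p)$ and the arithmetic--logarithmic mean inequality, and only this monotonicity combined with \eqref{lemma3.1_01} yields $\Phi''\ge 0$ for $p\le 0$. Worse, on $[0,1/2]$ even that monotonicity would not save you: since $\psi''$ decreases in $p$, the binding endpoint of the interval is $p=1/2$, not $p=0$, so \eqref{lemma3.1_01} gives no control there; you would need the genuinely new inequality $-\psi''(1/2)\le 2\log K(x)$, i.e.\ $4-\frac{(\log x)^2}{4\sinh^2\left(\frac{\log x}{4}\right)}\le 4\log\cosh\left(\frac{\log x}{2}\right)$, which is plausible numerically but proven nowhere. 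The paper avoids this entirely by proving case (i) with a monotonicity argument in $x$ (showing $f_p'(x)\ge 0$ through $g_p$, $g_p'$, $g_p''$), not in $p$.

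The branch $p\ge 1$ is a second genuine gap: you give no argument at all. Noting that $\frac{\psi(p)}{p(1-p)}$ vanishes at the second zero $p_0>1$ of $\psi$ and tends to $0^+$ as $p\to\infty$ says nothing about its size on $(p_0,\infty)$ -- and bounding it there by $\log K(x)$ is precisely what claim (ii) asserts on that range. Convexity cannot rescue this branch either: $\Phi''(p)=\psi''(p)+2\log K(x)\approx -\frac{(\log x)^2}{12}+2\log K(x)$ just past $p=1$, which is negative for large $x$ (the same phenomenon you observed on $(1/2,1)$ persists past $p=1$), so no tangent or chord argument anchored at $p=1$ can run on $[1,\infty)$. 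The paper treats $p>1$ by a separate differentiation in $x$ (through $h_p$ and $k_p$), and your proposal still needs an input of that kind to close both this case and the curvature estimates above.
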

\begin{proof}
It is sufficient to prove for $x\ge 1$. Since $K(x)\ge 1$, in order to prove (i),
\begin{eqnarray*}
&& \hat{G}_p(x,1)\ge K(x)^{-p(1-p)}W_p(x,1)\Longleftrightarrow \hat{G}_p(x,1)K(x)^{p(1-p)}\ge W_p(x,1) \\
&& \Longleftrightarrow x^{\frac{p}{2}}\left(\frac{(x+1)^2}{4x}\right)^{p(1-p)} \ge \frac{(1-p)(x-1)}{x^{1-p}-1}
\end{eqnarray*}
we set
$$
f_p(x):=\frac{p}{2}\log x+p(1-p)\left\{2\log (x+1)-\log (4x)\right\}-\log(1-p)-\log(x-1)+\log(x^{1-p}-1).
$$
Then we have
\begin{eqnarray*}
f_p'(x)&=&-\frac{1}{x-1}+\frac{p}{2x}+\frac{p(1-p)(x-1)}{x(x+1)}+\frac{1-p}{x-x^p}\\
&=&\frac{g_p(x)}{2(x-1)(x+1)(x-x^p)}
\end{eqnarray*}
where
$$
g_p(x):=2(x+1)(x^p-1-p(x-1))+p(x-1)(1-x^{p-1})\left((3-2p)x+(2p-1)\right).
$$
When $x \ge 1$ and $0\le p \le 1/2$, we have $1\ge x^{p-1}$, $-x^{p-3}\ge -x^{p-2}$ and $(1-p)(1-2p)(p-2)\le 0$.
Also when $x \ge 1$ and $0\le p \le 1/2$, we have $x^{p-1}\ge x^{p-2}$. Thus we calculate
\begin{eqnarray*}
&&g_p'(x)=(p+1)(2p^2-3p+2)x^p-2p(2p^2-2p-1)x^{p-1}+p(1-p)(1-2p)x^{p-2}\\
&&\qquad\qquad +2p(1-2p)x+2(2p^2-2p-1)\\
&&g_p''(x)=p\left\{(p+1)(2p^2-3p+2)x^{p-1}+2(1-p)(2p^2-2p-1)x^{p-2}\right.\\
&&\left.\qquad\qquad+(1-p)(1-2p)(p-2)x^{p-3}+2(1-2p)\right\}\\
&&\ge p\left\{(p+1)(2p^2-3p+2)x^{p-1}+2(1-2p)x^{p-1}+2(1-p)(2p^2-2p-1)x^{p-2}\right.\\
&&\left. \qquad\qquad+(1-p)(1-2p)(p-2)x^{p-2}\right\}\\
&&=(2p^3-p^2-5p+4)(x^{p-1}-x^{p-2})=2(1-p)\left\{(1-p)(1+p)+1-p/2\right\}(x^{p-1}-x^{p-2})\ge 0.
\end{eqnarray*}
Thus, we have $g_p'(x)\ge g_p'(1)=0$ so that $g_p(x)\ge g_p(1)=0$. Therefore we have $f_p'(x)\ge 0$ for $x \ge 1$ and taking an accout for 
$\lim\limits_{x\to 1}\dfrac{(1-p)(x-1)}{x^{1-p}-1}=1$, we have$f_p(x)\ge f_p(1)=0$ which proves (i).

It is also sufficent to prove (ii) for $x\ge 1$. For the special cases $p=0$ or $p=1$ we have equality.
Since 
$$
\hat{G}_p(x,1)\le K(x)^{-p(1-p)}W_p(x,1)\Longleftrightarrow x^{\frac{p}{2}}\left(\frac{(x+1)^2}{4x}\right)^{p(1-p)} \le \frac{(1-p)(x-1)}{x^{1-p}-1},
$$
we have only to prove $f_p(x)\le 0$ for $x\ge 1$.
\begin{itemize}
\item[(a)] We consider the case $p>1$. We set $g_p''(x)=p\cdot h_p(x)$, namely
$$
h_p(x):=(p+1)(2p^2-3p+2)x^{p-1}+2(1-p)(2p^2-2p-1)x^{p-2}+(1-p)(1-2p)(p-2)x^{p-3}+2(1-2p).
$$
Then
$$
h_p'(x)=(p-1)x^{p-4}k_p(x),
$$
where
$$
k_p(x):=2p^3(x-1)^2-p^2(x-1)(x-11)-p(x^2+6x-17)+2(x-3)(x+1)
$$
and we have
$$
k_p'(x)=4p^3(x-1)-2p^2(x-6)-2p(x+3)+4(x-1),\,\,\,k_p''(x)=2(p+1)(2p^2-3p+2)> 0,\,\,(p> 1).
$$
Thus, we have $k_p'(x)\ge k_p'(1)=10p^2-8p>0,\,\,(p> 1)$ so that $k_p(x)\ge k_p(1)=10p-8>0,\,\,(p> 1)$.
Therefore we have $h_p'(x)\ge 0,\,\,(p> 1)$ which implies $h_p(x)\ge h_p(1)=0$. Thus, we have $g_p''(x)\ge 0$ so that we have $g_p'(x)\ge g_p'(1)=0$ which implies $g_p(x)\ge g_p(1)=0$. Taking  account of $x-x^p\le 0$ when $x\ge 1$, $p>1$, we have $f_p'(x)\le 0$ Therefore we have $f_p(x)\le f_p(1)=0$ which proves (ii) for the case $p>1$.
\item[(b)] We consider the case $p<0$. We calculate
\begin{eqnarray*}
&&\frac{df_p(x)}{dp}=\frac{1}{1-p}+\left(\frac{1}{2}+\frac{x}{x^p-x}\right)\log x+(2p-1)\log (4x)+(2-4p)\log (x+1),\\
&&\frac{d^2f_p(x)}{dp^2}=-\frac{x^{p+1} (\log x)^2}{\left(x-x^p\right)^2}+\frac{1}{(p-1)^2}+2 \log (4 x)-4 \log (x+1),\\
&&\frac{d^3f_p(x)}{dp^3}=\frac{x^{p+1} \left(x^p+x\right) (\log x)^3}{\left(x^p-x\right)^3}-\frac{2}{(p-1)^3}.
\end{eqnarray*}
We further calculate
\begin{eqnarray*}
&&\frac{d}{dx}\left(\frac{d^3f_p(x)}{dp^3}\right)=-\frac{x^p(\log x)^2}{(x-x^p)^4}s(x,p),\\
&& s(x,p):=3(x^2-x^{2p})+(p-1)\left(x^2+x^{2p}+4x^{1+p}\right)\log x,\\
&&\frac{ds(x,p)}{dp}=\left\{x^2-5 x^{2 p}+4 x^{p+1}+2 (p-1) \left(x^{2p}+2 x^{p+1}\right)  \log (x)\right\}\log x, \\
&&\frac{d^2s(x,p)}{dp^2}=4x^p(\log x)^2 \left\{2(x-x^p)+(p-1)(x+x^p)\log x\right\}  \le 0 \,\,\,\,(p\le 1,\,\,x\ge 1).
\end{eqnarray*}
Indeed, putting $a:=x$, $b:=x^p$ in the inequality $\dfrac{a-b}{\log a-\log b}\le \dfrac{a+b}{2},\,\,\,(a,b>0)$, we have $2(x-x^p)+(p-1)(x+x^p)\log x\le 0$ for $p<1$ and $x \ge 1$. The equality holds when $p=1$.

From $\dfrac{d^2s(x,p)}{dp^2}\le 0$, we have
$\dfrac{ds(x,p)}{dp}\ge \left.{ \dfrac{ds(x,p)}{dp}} \right|_{p=1}=0$ which implies
$s(x,p)\le s(x,1)=0$ so that we have $\dfrac{d}{dx}\left(\dfrac{d^3f_p(x)}{dp^3}\right)\ge 0$.
From this, we have 
$$
\dfrac{d^3f_p(x)}{dp^3}\ge \dfrac{d^3f_p(1)}{dp^3} =-\frac{2}{(p-1)^3}>0,\,\,\,(p<1).
$$
By the inequality \eqref{lemma3.1_01}, we have
\begin{equation}\label{attention}
p\le 0\Longrightarrow \dfrac{d^2f_p(x)}{dp^2}\le \left.{ \dfrac{d^2f_p(x)}{dp^2} }\right|_{p=0}=1-\frac{x (\log x)^2}{(x-1)^2}+2 \log \left(\frac{4 x}{(x+1)^2}\right)\le 0.
\end{equation}
Thus, we have by the inequality \eqref{lemma3.1_02},
$$
p\le 0\Longrightarrow \frac{df_p(x)}{dp}\ge \left.{\frac{df_p(x)}{dp}}\right|_{p=0}=1-\frac{(x+1)\log x}{2(x-1)}+\log \left(\frac{(x+1)^2}{4x}\right) \ge 0.
$$
Therefore $p\le 0\Longrightarrow f_p(x)\le f_0(x)=0$ which proves (ii) for the case $p<0$.
\end{itemize}

\end{proof}

\begin{remark}\label{remark3.4}
\begin{itemize}
\item[(i)] Since $f_p(x)=\log \left\{\left(\dfrac{(x+1)^2}{4x}\right)^{p(1-p)} \times\dfrac{x^{p/2}(x^{1-p}-1)}{(1-p)(x-1)}\right\}$ appeared in the proof of Theorem \ref{theorem3.2} and comparing the maximum degree of the numerator and the denominator  insides of the logarithmic function, we found that if $p<0$ or $p>1/2$, then we have 
$$\lim\limits_{x\to\infty}\left\{\left(\frac{(x+1)^2}{4x}\right)^{p(1-p)}\times\dfrac{x^{p/2}(x^{1-p}-1)}{(1-p)(x-1)}\right\}=0.$$
Thus, we have $\lim\limits_{x\to\infty}f_p(x) =-\infty$ if $p<0$ or $p>1/2$. On the other hand, by the munerical computations, we have $f_{3/4}(e)\simeq 0.0063209$.
Therefore there is no ordering between $K(x)^{-p(1-p)}W_p(x,1)$ and $\hat{G}_p(x,1)$ for $x>0$ and $1/2< p < 1$. 
\item[(ii)] From Theorem \ref{theorem3.1} (i) and Theorem \ref{theorem3.2} (i), we have for $x>0$ and $0\le p \le 1/2$,
$$
K(x)^{-p(1-p)}W_p(x,1)\le \hat{G}_p(x,1)\le L(x,1) \le \hat{A}_p(x,1) \le W_p(x,1).
$$

\item[(iii)] From the proof (b) in Theorem \ref{theorem3.2}, we obtained 
$\dfrac{d^3f_p(x)}{dp^3}\ge 0,\,\,\,(p<1)$. From this with simple calculations, we have
$$
H(x,x^p)^3\le x^{p+1}A(x,x^p)\le L(x,x^p)^3,\,\,\,(p\le 1,\,\,x> 0).
$$
\end{itemize}
\end{remark}

\section{Conclusion}
As we have seen, we studied the inequalities on the relations between the Wigner--Yanase--Dyson function $W_p(\cdot,\cdot)$ and the logarithmic mean $L(\cdot,\cdot)$. As one of main results, we obtained two kinds of the reverse inequalities for $L(x,y)\le W_p(x,y)$ for $x,y>0$ and $0\le p \le 1$. That is,
the inequalities \eqref{sec2_eq06} and \eqref{sec2_eq07}  shown in Theorem \ref{theorem_diff_type} and \ref{theorem2.1} are respectively equivalent to the following inequalities for $x,y>0$ and $0\le p \le 1$:
\begin{eqnarray}
&&W_p(x,y) \le p(1-p)\left(\sqrt{x}-\sqrt{y}\right)^2+L(x,y),\label{sec4_eq01}\\
&&W_p(x,y) \le K\left(x/y\right)^{p(1-p)} L(x,y).\label{sec4_eq02}
\end{eqnarray}
The inequality \eqref{sec4_eq01} and \eqref{sec4_eq02} are the difference type reverse inequality and the ratio type reverse inequality for $L(x,y)\le W_p(x,y),\,\,(0\le p\le 1)$, respectively.

In addition, we compared the obtained inequality \eqref{sec4_eq02} with the known result in Section \ref{sec3}. It is summerized in the following.
The inequalities given in Remark \ref {remark3.4} (ii)
are equivalent to the following inequalities for $x,y>0$ and $0\le p \le 1/2$:
\begin{equation}
K\left(x/y\right)^{-p(1-p)}W_p(x,y)\le \hat{G}_p(x,y)\le L(x,y) \le \hat{A}_p(x,y) \le W_p(x,y).
\end{equation}

We conclude this paper by giving operator inequalities baesd on Theorem \ref{theorem_diff_type} and \ref{theorem2.1}. For positive operators $S,T$ and $0\le p\le 1$, we define the operator version of the logarithmic mean and the Wigner--Yanase--Dyson function as
\begin{eqnarray*}
&&L(S,T):=\int_0^1 S\sharp_tTdt,\,\,(S\neq T),\quad L(S,S):=S,\\
&&W_p(S,T):=\frac{p(1-p)}{2}(S-T)\left(S\nabla T-Hz_p(S,T)\right)^{-1}(S-T),\,\,(S\neq T,\,\,\,p\neq 0,1),\\
&&W_p(S,T):=L(S,T),\,\, (S\neq T,\,\,\,p= 0\,\,\text{or}\,\,1),\quad W_p(S,S):=S,\,\,(0\le p\le 1),
\end{eqnarray*}
 where
 $$
 S\sharp_pT:=S^{1/2}\left(S^{-1/2}TS^{-1/2}\right)^pS^{1/2},\,\,S\nabla T:=\frac{S+T}{2},\,\,Hz_p(S,T):=\frac{1}{2}\left( S\sharp_pT+ S\sharp_{1-p}T\right).
 $$
 We often use the symbol $S\sharp T:=S\sharp_{1/2}T$ for short. $Hz_p(S,T)$ is often called the Heinz mean. It is notable that we have the relation for the validity in the case $p:=1/2$,
 $$
 \left(\frac{S-T}{2}\right)\left(S\nabla T+S\sharp T\right)^{-1}\left(\frac{S-T}{2}\right)=S\nabla T-S\sharp T,
 $$
 which can be confirmed by multiplying $S^{-1/2}$ to both sides.
 
 From Theorem \ref{theorem_diff_type}, we have the following corollary.
 \begin{corollary}
 Let $S$ and $T$ be positive operators and let $0\le p\le 1$. Then we have
 $$
 W_p(S,T)\le 2p(1-p)\left(S\nabla T-S\sharp T\right)+L(S,T).
 $$
 \end{corollary}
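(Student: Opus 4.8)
The plan is to reduce the operator inequality to the scalar inequality \eqref{sec4_eq01} (equivalently the first inequality of \eqref{sec2_eq06}) by the standard device of a congruence transformation combined with the functional calculus. I would first assume that $T$ is invertible; the general positive case then follows by replacing $T$ with $T+\varepsilon I$ and letting $\varepsilon \downarrow 0$. Set $X := T^{-1/2} S T^{-1/2}$, a positive operator, and observe that after this normalization every term appearing in the corollary becomes a function of the single operator $X$, sandwiched between $T^{1/2}$ on both sides.

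The key computation rests on the weighted geometric mean identity $S\sharp_t T = T\sharp_{1-t} S = T^{1/2} X^{1-t} T^{1/2}$, which one checks directly from the definitions. First I would record its three consequences:
\begin{align*}
L(S,T) &= \int_0^1 S\sharp_t T\, dt = T^{1/2}\left(\int_0^1 X^{1-t}\,dt\right) T^{1/2} = T^{1/2}\, L(X,1)\, T^{1/2},\\
S\nabla T - S\sharp T &= T^{1/2}\left(\frac{X+I}{2} - X^{1/2}\right) T^{1/2} = T^{1/2}\bigl(A(X,1) - G(X,1)\bigr) T^{1/2},\\
Hz_p(S,T) &= T^{1/2}\cdot \frac{X^p + X^{1-p}}{2}\cdot T^{1/2},
\end{align*}
where $\int_0^1 X^{1-t}\,dt = (X-I)(\log X)^{-1}$ reproduces $L(x,1)$. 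For the Wigner--Yanase--Dyson term, $S-T = T^{1/2}(X-I)T^{1/2}$ and $S\nabla T - Hz_p(S,T) = T^{1/2} M T^{1/2}$ with $M := \tfrac{1}{2}\bigl((X+I)-(X^p+X^{1-p})\bigr)$, so that congruence-invariance of the inverse yields
\[
W_p(S,T) = \frac{p(1-p)}{2}\, T^{1/2}(X-I)\,M^{-1}(X-I)\, T^{1/2} = T^{1/2}\, W_p(X,1)\, T^{1/2},
\]
the last equality using that $X-I$ and $M$ are commuting functions of $X$ together with the scalar identity $(x^p-1)(x^{1-p}-1) = (x+1)-(x^p+x^{1-p})$.

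With all three terms written in the form $T^{1/2}\phi(X)T^{1/2}$, the asserted inequality is equivalent, after conjugating by $T^{-1/2}$, to $W_p(X,1) \le 2p(1-p)\bigl(A(X,1)-G(X,1)\bigr) + L(X,1)$ read through the functional calculus of $X$. Since $2p(1-p)\bigl(A(x,1)-G(x,1)\bigr) = p(1-p)(\sqrt{x}-1)^2$, this is exactly the first inequality of \eqref{sec2_eq06} established in Theorem \ref{theorem_diff_type}, valid for every $x>0$; because the three functions are all functions of the single operator $X$ and hence mutually commute, the scalar inequality transfers verbatim to $X$ by the order-preservation of the functional calculus, and conjugating back by $T^{1/2}$ finishes the argument. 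I expect the only delicate points to be the representation of the $W_p$ term, where one must confirm that the sandwiched inverse passes cleanly through the congruence (this works precisely because $X-I$ and $M$ commute), and the edge cases $S=T$ and eigenvalue $1$ of $X$, where $M$ is singular; there one invokes the separate definition $W_p(S,S):=S$ together with the limiting argument above.
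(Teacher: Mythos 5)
Your proposal is correct and is precisely the argument the paper intends (the paper states the corollary without an explicit proof, deriving it "from Theorem \ref{theorem_diff_type}" by exactly this standard congruence-plus-functional-calculus reduction, hinted at by its remark that the identity for the case $p=1/2$ "can be confirmed by multiplying $S^{-1/2}$ to both sides"). Your verification that $W_p(S,T)=T^{1/2}W_p(X,1)T^{1/2}$ via the scalar identity $(x^p-1)(x^{1-p}-1)=(x+1)-(x^p+x^{1-p})$, and your handling of the singular cases, fill in the details faithfully.
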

 
 From Theorem \ref{theorem2.1}, we also have the following corollary.
  \begin{corollary}
  Let $S$ and $T$ be positive operators with $\alpha S\le T \le \beta S$ for $0< \alpha \le \beta$ and let $0\le p\le 1$. Then we have
  $$
  W_p(S,T)\le k_p\cdot L(S,T),\quad k_p:=\max_{\alpha \le x\le \beta} K(x)^{p(1-p)}.
  $$
   \end{corollary}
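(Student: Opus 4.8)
The plan is to reduce the operator inequality to the scalar inequality of Theorem \ref{theorem2.1} through a single congruence transformation. First I would set $X:=S^{-1/2}TS^{-1/2}$, so that the spectral hypothesis $\alpha S\le T\le \beta S$ becomes $\alpha I\le X\le \beta I$; in particular the spectrum of the positive operator $X$ lies in the compact interval $[\alpha,\beta]$, and $k_p=\max_{\alpha\le x\le\beta}K(x)^{p(1-p)}$ is a well-defined finite constant by continuity (with $k_p\ge 1$, since $K\ge 1$ and $p(1-p)\ge 0$).

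The main computational step is to exhibit both means as a congruence of a scalar function applied to $X$ via the continuous functional calculus. For the logarithmic mean, writing $S\sharp_tT=S^{1/2}X^tS^{1/2}$ and using $\int_0^1 X^t\,dt=\dfrac{X-I}{\log X}$ in the functional calculus gives $L(S,T)=S^{1/2}\,\ell(X)\,S^{1/2}$ with $\ell(x)=L(x,1)$. For the Wigner--Yanase--Dyson function, factoring $S^{1/2}$ out of each factor — namely $S-T=S^{1/2}(I-X)S^{1/2}$, $S\nabla T=S^{1/2}\frac{I+X}{2}S^{1/2}$ and $Hz_p(S,T)=S^{1/2}\frac{X^p+X^{1-p}}{2}S^{1/2}$ — collapses the definition of $W_p(S,T)$ to $S^{1/2}\,w(X)\,S^{1/2}$, where $w(x)=\dfrac{p(1-p)(1-x)^2}{(1+x)-(x^p+x^{1-p})}=W_p(x,1)$, the last equality following from $(x^p-1)(x^{1-p}-1)=(1+x)-(x^p+x^{1-p})$.

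With both means expressed through a common congruence by $S^{1/2}$, I would finish by transporting the scalar bound along the spectrum. By Theorem \ref{theorem2.1}, $W_p(x,1)\le K(x)^{p(1-p)}L(x,1)\le k_p\,L(x,1)$ for every $x\in[\alpha,\beta]$, since $K(x)^{p(1-p)}\le k_p$ and $L(x,1)>0$. As this scalar inequality holds on all of $\mathrm{spec}(X)$, the monotonicity of the functional calculus yields $w(X)\le k_p\,\ell(X)$, and congruence by $S^{1/2}$, which preserves the positive semidefinite order, gives $W_p(S,T)=S^{1/2}w(X)S^{1/2}\le k_p\,S^{1/2}\ell(X)S^{1/2}=k_p\,L(S,T)$.

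The step I expect to require the most care is the second one: confirming that the resolvent-type definition of $W_p(S,T)$ really reduces to $S^{1/2}W_p(X,I)S^{1/2}$. The algebra is routine once $S^{1/2}$ is factored out, but one should note that $(1+x)-(x^p+x^{1-p})=(x^p-1)(x^{1-p}-1)>0$ for $x\neq 1$, so that $S\nabla T-Hz_p(S,T)$ is invertible whenever $S\neq T$ and the singularity of $w$ at $x=1$ is removable (consistent with the convention $W_p(S,S)=S$); this legitimizes the identification $W_p(S,T)=S^{1/2}w(X)S^{1/2}$. After that, the conclusion is a direct application of order-preservation under functional calculus and under congruence.
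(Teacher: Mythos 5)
Your proof is correct and is precisely the argument the paper intends: the paper states this corollary without proof as a direct consequence of Theorem \ref{theorem2.1}, and your reduction --- congruence by $S^{1/2}$, the scalar bound $W_p(x,1)\le K(x)^{p(1-p)}L(x,1)\le k_p\,L(x,1)$ on $\mathrm{spec}\bigl(S^{-1/2}TS^{-1/2}\bigr)\subseteq[\alpha,\beta]$, then order-preservation of the functional calculus and of congruence --- is the standard route. The only slip is the parenthetical claim that $S\nabla T-Hz_p(S,T)$ is invertible whenever $S\neq T$: the operator $X=S^{-1/2}TS^{-1/2}$ can have $1$ in its spectrum without equaling $I$, in which case the paper's resolvent-style definition is itself ill-posed, and your functional-calculus reading $W_p(S,T)=S^{1/2}w(X)S^{1/2}$ with the removable singularity at $x=1$ is exactly the right repair, so this does not damage the argument.
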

\section*{Acknowledgements}
The authors would like to thank the referees for their careful and insightful comments to improve our manuscript.
The author (S.F.) was partially supported by JSPS KAKENHI Grant Number 21K03341.

\end{document}